%
%
\documentclass[11pt, a4paper]{article}
\usepackage[latin1]{inputenc}
\usepackage[T1]{fontenc}
\usepackage[english]{babel}
\selectlanguage{english}
\usepackage{amssymb, amsmath, amsthm}
\usepackage[colorlinks=true, allcolors=blue]{hyperref}
\usepackage{geometry}
\usepackage{fancyhdr}
\usepackage{relsize}
\usepackage{upgreek}
\usepackage{verbatim}
\usepackage{enumitem}
\usepackage{calrsfs}
\usepackage{todonotes}
\usepackage{tikz}
\usetikzlibrary{matrix,arrows,calc,cd}
\usepackage{package}
%
%
%
\geometry{hscale=.8, headheight=14pt}
%
%
\pagestyle{fancy}
\lhead{}
\rhead{\nouppercase{\rightmark}}

\makeatletter
\renewcommand*{\@fnsymbol}{\@arabic}
\makeatother
\title{On the Cohomological Hall Algebra of the Kronecker Quiver}
\author{H. Franzen%
\thanks{Faculty of Mathematics, Ruhr-Universit\"at Bochum, Universit\"atsstra{\ss}e 150, 44780 Bochum\newline \href{mailto:hans.franzen@rub.de}{hans.franzen@rub.de}}
\and M. Reineke%
\thanks{Faculty of Mathematics, Ruhr-Universit\"at Bochum, Universit\"atsstra{\ss}e 150, 44780 Bochum\newline \href{mailto:markus.reineke@rub.de}{markus.reineke@rub.de}}%
}

\date{}
%
%
%
%
\begin{document}

	\maketitle
	\begin{abstract}
		\noindent
		We give a short introduction to Cohomological Hall algebras of quivers and describe the semistable Cohomological Hall algebra of central slope of the Kronecker quiver in terms of generators and relations.
	\end{abstract}
	
	\section{Generalities on Cohomological Hall algebras}
	
	The Cohomological Hall algebra was introduced by Kontsevich and Soibelman in \cite{KS:11}. There are various levels of generality in which a Cohomological Hall algebra can be defined. There is the critical Cohomological Hall algebra of a quiver with a potential which is also introduced in \cite{KS:11} and has been studied for example in \cite{Davison:17}, \cite{SV:17:1, SV:17:2}, \cite{YZ:18:1}. There are also versions of the Cohomological Hall algebra of sheaves on a curve or a surface, as studied in \cite{KSV:17} or of Higgs sheaves \cite{SS:18}.
	
	In this work, we concentrate on (semistable) Cohomological Hall algebras of quivers. The aim of this section is to recall their definition as well as some structural results.
	
	\subsection{Definition of the Cohomological Hall algebra of a quiver}
	
Let $Q$ be a finite quiver with set of vertices $Q_0$ and set of arrows $Q_1$. For an arrow $\alpha \in Q_1$ the source and target of $\alpha$ are denoted by $s(\alpha)$ and $t(\alpha)$, respectively. For a dimension vector $\mathbf{d}\in\Lambda^+=\mathbb{N}Q_0\subseteq\Lambda=\mathbb{Z}Q_0$, we define 
$$
	R_\mathbf{d} = \bigoplus_{\alpha \in Q_1} \Hom(\C^{d_{s(\alpha)}},\C^{d_{t(\alpha)}})
$$ 
which we regard as an affine space. Its points correspond to complex representations of $Q$ of dimension vector $\mathbf{d}$. The linear algebraic group $G_\mathbf{d} = \prod_{i \in Q_0} \GL_{d_i}(\C)$ acts on $R_\mathbf{d}$ via base change, so that the orbits for this action correspond to the isomorphism classes of representations of dimension vector $\mathbf{d}$. On the $\Lambda_+$-graded $\Q$-vector space of equivariant cohomology groups
$$
	H(Q)=\bigoplus_{\mathbf{d}\in\Lambda^+}H^*_{G_\mathbf{d}}(R_\mathbf{d};\Q)
$$ 
a product, denoted $*$, can be defined using pull-back and push-forward operations. Together with this product, $H(Q)$ is an associative algebra, naturally graded by $\Lambda^+$. It is called the Cohomological Hall algebra. The product resembles the convolution product of the Ringel-Hall algebra of a quiver of \cite{Ringel:90}. As we will not use the definition of this convolution-like product we refer to \cite[2.2]{KS:11} for details and instead recall the algebraic description of the multiplication of \cite[Thm.\ 2]{KS:11}. The homogeneous component of degree $\mathbf{d}$ of the algebra $H(Q)$ is given by
$$
	H(Q)_\mathbf{d}\cong \bigotimes_{i\in Q_0}\mathbb{Q}[x_{i,1},\ldots,x_{i,d_i}]^{S_{d_i}},
$$
and the multiplication is given by a shuffle product with kernel: 
For $f=f(x_{i,k}\, :\, i\in Q_0,\, k\leq d'_i)\in H(Q)_\mathbf{d'}$ and $g=g(x_{i,k}\, :\, i\in Q_0,\, k\leq d''_i)\in H(Q)_\mathbf{d''}$,
the product $(f*g)(x_{i,k}\, :\, i\in Q_0,\, k\leq d'_i+d''_i)$ equals
$$
	\sum_{(\sigma_i)}f(x_{i,\sigma_i(k)},\, i\in Q_0,\, k\leq d'_i)g(x_{i,\sigma_i(d'_i+l)},\, i\in Q_0,\, l\leq d''_i)\prod_{i,j\in Q_0}\prod_{k=1}^{d'_i}\prod_{l=1}^{d''_j}(x_{j,\sigma_j(d'_j+l)}-x_{i,\sigma_i(k)})^{-\langle \mathbf{e}_i,\mathbf{e}_j\rangle},
$$
where the sum ranges over all tuples $(\sigma_i)_{i\in Q_0}$ such that $\sigma_i$ is a $(d'_i,d''_i)$-shuffle permutation. In the above formula $\langle \blank,\blank \rangle$ denotes the Euler form of $Q$. This is the $\Z$-bilinear form on $\Z^{Q_0}$ given by $\langle \mathbf{a},\mathbf{b} \rangle = \sum_{i \in Q_0} a_ib_i - \sum_{\alpha \in Q_1} a_{s(\alpha)}b_{t(\alpha)}$. Moreover $\mathbf{e}_i \in \Z^{Q_0}$ is the $i$\textsuperscript{th} coordinate vector.

\subsection{Symmetric case}

The Cohomological Hall algebra has particularly nice properties if the quiver $Q$ is symmetric, which means that its Euler form is a symmetric bilinear form. 
In this case, the algebra $H(Q)$ is graded by the semi-group $\Gamma = \Lambda^+ \times \Z$ by defining
$$
	H(Q)_{(\mathbf{d},k)} = H_{G_d}^{k-\langle \mathbf{d},\mathbf{d} \rangle}(R_\mathbf{d}).
$$
It is possible to find a bilinear form $\psi$ on $\Z/2\Z^{Q_0}$ in such a way that the twisted multiplication $f \star g := (-1)^{\psi(\mathbf{d'},\mathbf{d''})} f*g$ is graded commutative in the sense that $f \star g = (-1)^{kl} g \star f$ for $f \in H(Q)_{(\mathbf{d'},k)}$ and $g \in H(Q)_{(\mathbf{d''},l)}$, see \cite[2.6]{KS:11}.
Efimov proves in \cite{Efimov:12} a conjecture of Kontsevich and Soibelman that $H(Q)$, equipped with the twisted multiplication, is isomorphic to the graded symmetric algebra $\mathrm{Sym}^*(V)$ for a $\Gamma$-graded vector space $V$ which is of the form $V = V^{\mathrm{prim}} \otimes \Q[z]$, where $z$ lives in degree $(0,2)$. The Poincar\'e series of $V^{\mathrm{prim}}$ encodes the motivic Donaldson-Thomas invariants of $Q$. In fact, this was the first proof of the positivity conjecture for Donaldson--Thomas invariants of a symmetric quiver.

\subsection{Semistable Cohomological Hall algebra}

For general $Q$, the structure of $H(Q)$ can be analyzed using semistable Cohomological Hall algebras: 
We choose a stability function $\Theta\in\Lambda^*$. For $\mathbf{d} \in \Lambda^+\setminus \{0\}$ the rational number $\mu(\mathbf{d}) = \Theta(d)/\smash{\sum_{i \in Q_0}} d_i$ is called the slope of $\mathbf{d}$. A representation $M$ of $Q$ is called $\Theta$-semistable (resp.\ $\Theta$-stable) if $\mu(\mathbf{dim}\, M') \leq \mu(\mathbf{dim}\, M)$ (resp.\ $\mu(\mathbf{dim}\, M') < \mu(\mathbf{dim} M)$) for every non-zero proper subrepresentation $M' \subseteq M$.
Let $R_\mathbf{d}^{\Theta-\mathrm{sst}}\subseteq R_\mathbf{d}$ be the Zariski-open subset of $\Theta$-semistable points. For a fixed $\mu\in\mathbb{Q}$, we consider the set $\Lambda^+_\mu = \{\mathbf{d} \in \Lambda^+\setminus \{0\} \mid \mu(\mathbf{d})=\mu\} \cup \{0\}$. We then define
$$
H^{\Theta}_\mu(Q)=\bigoplus_{\mathbf{d}\in\Lambda^+_\mu}H^*_{G_\mathbf{d}}(R^{\Theta-\mathrm{sst}}_\mathbf{d};\Q)
$$
with the convolution structure induced by the one on $H(Q)$, called the $\Theta$-semistable Cohomological Hall algebra of $Q$ for slope $\mu$.
It is shown in \cite[Thm.\ 6.2]{FR:18:Sst_ChowHa} that the Harder-Narasimhan filtration of representations of $Q$ induces an isomorphism of $\Lambda^+$-graded vector spaces
$$
	H(Q)\cong \bigotimes^{\rightarrow}_{\mu\in\mathbb{Q}}H_\mu^{\Theta}(Q).
$$

In case the Euler form $\langle\_,\_\rangle_Q$ of $Q$ is symmetric when restricted to $\Lambda^+_\mu$, good structural properties of $H^\Theta_\mu(Q)$ are expected; however, even in simple cases it is no longer (graded) commutative. In \cite[Thm.\ C]{DM:16}, Davison and Meinhardt prove (in a more general context) a PBW type theorem for $H^\Theta_\mu(Q)$. They show the existence of a filtration (the perverse filtration) on $H^\Theta_\mu(Q)$ whose associated graded is a graded symmetric algebra.

\subsection{Generators}

There are several geometric interpretations of the generators of the Cohomological Hall algebra. For instance, Chen identifies in \cite{Chen:14} the primitive part of the Cohomological Hall algebra of a symmetric quiver with the invariant part under a Weyl group action of the cohomology of a variety introduced in \cite{HLV:13}. Another interpretation can be given as follows. Denoting by $R_\mathbf{d}^{\Theta-\mathrm{st}}(Q)$ the locus of stable representations, it is shown in \cite[Thm.\ 9.1]{FR:18:Sst_ChowHa} that
$$
	A^*_{G_\mathbf{d}}(R_\mathbf{d}^{\Theta-\mathrm{st}}(Q))_\mathbb{Q}\cong H^\Theta_\mathbf{d}(Q)/\sum H^\Theta_\mu(Q)_\mathbf{d'}* H^\Theta_\mu(Q)_\mathbf{d''},
$$
where the sum ranges over all decompositions $\mathbf{d}=\mathbf{d'}+\mathbf{d''}$ such that $\mu(\mathbf{d})=\mu(\mathbf{d''})$. This describes the quotient of the algebra $H^\Theta_\mu(Q)$ by the square of the augmentation ideal $\smash{\bigoplus_{\mathbf{d} \in \Lambda_\mu^+ \setminus \{0\}}} H^\Theta_\mu(Q)_\mathbf{d}$ as the equivariant Chow ring $A_{G_d}^*(\smash{R_\mathbf{d}^{\Theta-\mathrm{st}}}(Q))_\Q$  of the stable locus in the sense of \cite{EG:98}.

\subsection{Presentation}\label{ssp}

The key to describing the semi-stable Cohomological Hall algebra explicitly in terms of generators and relations (which will be worked out in the case of the Kronecker quiver in the following sections) is the following tautological presentation for $H^\Theta_\mu(Q)$:
$$
	H^\Theta_\mu(Q)_\mathbf{d}\cong H(Q)_\mathbf{d}/\sum H(Q)_\mathbf{d'}* H(Q)_\mathbf{d''},
$$
where the sum ranges over all decompositions $\mathbf{d}=\mathbf{d'}+\mathbf{d''}$ such that $\mu(\mathbf{d'})>\mu(\mathbf{d''})$.

\subsection{Examples}

There are three examples in which an explicit description of the Cohomological Hall algebra in terms of generators and relations is known, namely for the quivers $A_1$ (one vertex and no arrows), $L_1$ (one vertex, one arrow), and $\tilde{A}_{2,0}$ (two vertices and two arrows of opposite orientation). The first two examples were already discussed in \cite[2.5]{KS:11}, the latter in \cite[10.1]{FR:18:Sst_ChowHa}. All these quivers are symmetric whence we know by Efimov's theorem that their Cohomological Hall algebra is a graded symmetric algebra.

For $Q = A_1$, the Cohomological Hall algebra as a vector space is $H(A_1) = \bigoplus_{d \geq 0} \Q[x_1,\ldots,x_d]^{S_d}$. The Euler form of $A_1$ is $\langle d,e \rangle = de$. The $\Lambda^+ \times \Z=\N \times \Z$-grading of $H(A_1)$ is hence given by assigning to each $x_i$ the degree 2 and letting
$$
	H(A_1)_{(d,k)} = \Q[x_1,\ldots,x_d]^{S_d}_{k-d^2} = \bigoplus_{2n_1 + \ldots + 2dn_d = k-d^2} \Q\cdot e_1^{n_1}\ldots e_d^{n_d}.
$$
The multiplication of $H(A_1)$ is given by
$$
	(f*g)(x_1,\ldots,x_d) = \sum f(x_{\sigma(1)},\ldots,x_{\sigma(d')})g(x_{\sigma(d'+1)},\ldots,x_{\sigma(d)})\frac{1}{\prod_{r=1}^{d'}\prod_{s=1}^{d''} (x_{\sigma(d'+s)} - x_{\sigma(r)})}.
$$
The sum ranges over all $(d',d'')$-shuffle permutations, $f \in H(A_1)_{d'}$, $g \in H(A_1)_{d''}$, and $d=d'+d''$. We see that this multiplication is anti-commutative, whence $f * f = 0$. This induces an algebra homomorphism $\bigwedge^* H(A_1)_1 \to H(A_1)$. Let $\psi_i$ be the polynomial $\psi_i(x) = x^i \in H(A_1)_{(1,2i+1)}$ (the $i$ in the exponent refers to the usual multiplication of polynomials and not to the multiplication in the Cohomological Hall algebra). The elements $\psi_0,\psi_1,\psi_2,\ldots$ form a homogeneous basis of $H(A_1)$. It is easy to see that for integers $0 \leq k_1 < \ldots k_d$ we get
$$
	\psi_{k_1} * \ldots * \psi_{k_d} = s_\lambda(x_1,\ldots,x_d)
$$
where $\lambda = (k_d-d+1,\ldots,k_2-1,k_1)$ and $s_\lambda$ is the Schur function associated with the partition $\lambda$. This shows that the induced homomorphism $\bigwedge^*(\psi_0,\psi_1,\psi_2,\ldots) \to H(A_1)$ is surjective and a comparison of the generating functions of these two algebras yields that the surjection must be an isomorphism. So
$$
	H(A_1) \cong \operatorname{Sym}^*(\Q(1,1)\otimes\Q[z])
$$
as $\N \times \Z$-graded algebras, where $\Q(d,i)$ denotes a one-dimensional vector space in degree $(d,i)$ and $z$ is an element of degree $(0,2)$.

The Cohomological Hall algebra of the quiver $Q=L_1$ with one loop is as a vector space again $H(L_1) = \bigoplus_{d \geq 0} \Q[x_1,\ldots,x_d]^{S_d}$ but as the Euler form vanishes the $\N \times \Z$-grading is given by
$$
	H(A_1)_{(d,k)} = \Q[x_1,\ldots,x_d]^{S_d}_{k} = \bigoplus_{2n_1 + \ldots + 2dn_d = k} \Q\cdot e_1^{n_1}\ldots e_d^{n_d}.
$$
The multiplication is given by
$$
	(f*g)(x_1,\ldots,x_d) = \sum f(x_{\sigma(1)},\ldots,x_{\sigma(d')})g(x_{\sigma(d'+1)},\ldots,x_{\sigma(d)}).
$$
The multiplication is commutative which shows that we obtain a homomorphism $S^*(H(L_1)_1) \to H(L_1)$. Again we denote $\psi_i(x) = x^i \in H(L_1)_{(1,2i)}$. We can see that for integers $k_1 \geq \ldots \geq k_d \geq 0$
$$
	\psi_{k_1} * \ldots * \psi_{k_d} = c_\lambda m_\lambda(x_1,\ldots,x_d)
$$
where $m_\lambda$ is the monomial symmetric function associated with the partition $\lambda = (k_1,\ldots,k_d)$ and $c_\lambda$ is a positive integer. This shows surjectivity of $S^*(\psi_0,\psi_1,\psi_2,\ldots) \to H(L_1)$. As the generating series of these algebras agree the map is an isomorphism. Therefore
$$
	H(L_1) \cong \operatorname{Sym}^*(\Q(1,0) \otimes \Q[z])
$$
as $\N \times \Z$-graded algebras. Again $z$ is of degree $(0,2)$.

For the quiver $\tilde{A}_{2,0}$ of affine type $\tilde{A}_1$ with the cyclic orientation, an explicit presentation can be determined by decomposing it into its semistable parts. the choice of an appropriate stability condition yields a tensor product decomposition (as $\Lambda^+ \times \Z$-graded algebras) into three factors, two of which are isomorphic to the Cohomological Hall algebra of $A_1$ and the other being isomorphic to $H(L_1)$. Concretely
$$
	H(\tilde{A}_{2,0}) \cong \operatorname{Sym}^*\Big( \big(\Q((1,0),1) \oplus \Q((1,1),0) \oplus \Q((0,1),1) \big) \otimes \Q[z]\Big)
$$
as $\Lambda^+ \times \Z$-graded algebras and $z$ of degree $(0,2)$.


\section{The case of the Kronecker quiver}

We now specialize the above results to the Kronecker quiver $Q=K_2$ with vertices $i$ and $j$ and two arrows from $i$ to $j$ for the stability $\Theta(d_i,d_j)=d_i$, recall some further results from \cite[Section 10.2.]{FR:18:Sst_ChowHa}, and state our main result.

It is known that $$H^\Theta_{\mu(\mathbf{d})}(K_2)\cong\Lambda^*(\Q(d,1) \otimes \mathbb{Q}[z])$$ is isomorphic to a countably generated exterior algebra for $\mathbf{d}$ of the form $(n,n+1)$ or $(n+1,n)$ for $n\geq 0$, and that $H^\Theta_\mu(K_2)$ is zero if $\mu$ is neither zero nor of the form $n/(2n\pm 1)$ for $n\geq 1$.

It thus remains to determine the structure of $H^\Theta_{1/2}(K_2)$. We denote this algebra simply by $A$; 
it is $\mathbb{N}$-graded by
$A_n=\smash{H^\Theta_{1/2}(K_2)_{(n,n)}}$. It is already known that there is an isomorphism of $\mathbf{N}$-graded vector spaces
$$A\cong\mathrm{Sym}^*(A^*(\mathbb{P}^1)[z])$$
with the generating vector space in degree $1$. Our main result is:

\begin{thm}\label{t1} The algebra $A$ is generated in degree $1$ by generators $e_n$ for $n\geq 0$ and $f_n$ for $n\geq 1$, subject to the relations
\begin{align*}
[E(X),E(Y)]_*&=2(Y-X)\frac{YE(Y)-XE(X)}{Y-X}*\frac{YF(Y)-XF(X)}{Y-X},\\
[E(X),F(Y)]_*&=(Y-X)\left(\frac{YF(Y)-XF(X)}{Y-X}\right)^{*2},\\
[F(X),F(Y)]_*&=0
\end{align*}
in terms of generating series
\begin{align*}
E(X)&=\sum_{n\geq 0}e_nX^n,\;\;\; F(X)=\sum_{n\geq 0}f_{n+1}X^n.
\end{align*}
More explicitly, these identities are equivalent to the following relations:
\begin{align*}
[e_p,e_q]_*&=2(e_p*f_q+e_{p+1}*f_{q-1}+\ldots+e_{q-1}*f_{p+1}) &\mbox{ for }p<q,\\
[e_p,f_{q+1}]_*&=f_{p+1}*f_q+\ldots+f_qf_{p+1} &\mbox{ for }p<q,\\
[e_p,f_{q+1}]_*&=-f_{q+1}*f_p-\ldots- f_p*f_{q+1} &\mbox{ for }p>q,\\
[f_{p+1},f_{q+1}]_*&=0 &\mbox{ for }p<q.
\end{align*}
\end{thm}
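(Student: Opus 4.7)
The plan is to use the tautological presentation of Section~\ref{ssp} and proceed in three stages: identification of degree-$1$ generators, verification of the relations by shuffle computation, and matching of Poincar\'e series to establish completeness. For stage one, the presentation yields $A_1 \cong H(K_2)_{(1,1)}/\bigl(H(K_2)_{(1,0)} * H(K_2)_{(0,1)}\bigr)$; the shuffle formula gives $f(x) * g(y) = f(x)g(y)(y-x)^2$ for $f \in H(K_2)_{(1,0)}$ and $g \in H(K_2)_{(0,1)}$, the kernel exponent being $-\langle \mathbf{e}_i, \mathbf{e}_j \rangle = 2$. Hence $A_1 \cong \mathbb{Q}[x,y]/((y-x)^2)$, with basis $\{x^n : n \geq 0\} \cup \{(y-x)x^{n-1} : n \geq 1\}$. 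Matching the known isomorphism $A_1 \cong A^*(\mathbb{P}^1)[z]$, I would take $e_n := x^n$ and $f_n := (y-x)x^{n-1}$, so that $E(X) = 1/(1-xX)$ and $F(X) = (y-x)/(1-xX)$ as $A_1$-valued formal power series in $X$.

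For stage two, $A_2$ is the quotient of $H(K_2)_{(2,2)} = \mathbb{Q}[x_1,x_2]^{S_2} \otimes \mathbb{Q}[y_1,y_2]^{S_2}$ by the ideal of products from decompositions with strictly higher slope first. I would write $E(X) * E(Y)$, $E(X) * F(Y)$, and $F(X) * F(Y)$ as formal sums over the four pairs of $(1,1)$-shuffle permutations; each becomes a rational expression in $x_1, x_2, y_1, y_2, X, Y$ with denominators $(x_2-x_1)(y_2-y_1)$. Forming the commutators cancels the antisymmetric parts, and the remaining expressions reduce modulo the tautological ideal: terms carrying a factor $(y_k-x_\ell)^2$ with matching indices lie in the ideal generated by the relevant higher-slope-first decomposition, such as $(1,0) * (1,2)$ or $(2,1) * (0,1)$. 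After reduction, the commutators should factor into products of the same generating series $E$, $F$, matching the right-hand sides; extracting coefficients of $X^p Y^q$ then yields the explicit identities listed in the theorem.

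For stage three, since $A$ is known as an $\mathbb{N}$-graded vector space to be $\mathrm{Sym}^*(A^*(\mathbb{P}^1)[z])$, its Poincar\'e series is explicit. Let $\widetilde{A}$ denote the abstract algebra on $\{e_n, f_n\}$ modulo the stated relations. The relations $[f,f]_* = 0$ make the $f$-subalgebra commutative; the mixed relations allow pushing $f$'s to the right of $e$'s; and the $[e,e]_*$ relations sort the remaining $e$'s at the cost of producing lower-$e$-degree terms involving $f$'s. This should yield a spanning set of normal-form monomials in $\widetilde{A}$ whose graded dimension agrees with the Poincar\'e series of $A$, making the natural surjection $\widetilde{A} \twoheadrightarrow A$ an isomorphism.

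The main obstacle is completeness in stage three: the mixed commutator relations produce many auxiliary terms, and tracking the normal-form reduction carefully enough to confirm a spanning set of exactly the right size is delicate. A cleaner route may be to invoke the Davison--Meinhardt PBW theorem for $H^\Theta_\mu$, whose associated graded is a symmetric algebra: if the stated relations suffice to recover this associated graded, the Poincar\'e series count follows automatically.
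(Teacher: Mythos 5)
Your plan reproduces the paper's proof almost step for step: the same identification $A_1\cong\mathbb{Q}[x,y]/(y-x)^2$ with the same basis $e_n=x^n$, $f_n=x^{n-1}(y-x)$; the same explicit shuffle computations in $H(K_2)_{(2,2)}$, including the key observation that $[F(X),F(Y)]_*$ is a \emph{nonzero} element of $H(K_2)_{(2,2)}$ which nevertheless lies in the image of $H(K_2)_{(1,0)}*H(K_2)_{(1,2)}$ and hence vanishes in $A_2$; and the same completeness argument comparing a spanning set of ordered monomials against the known graded dimension of $\mathrm{Sym}^*(A^*(\mathbb{P}^1)[z])$. The straightening step you flag as delicate is in fact the easy part and needs no appeal to Davison--Meinhardt: with the order $e_0,e_1,e_2,\ldots,f_1,f_2,\ldots$, each of the four families of relations rewrites a wrongly ordered quadratic product $b_j*b_i$ ($i<j$) as a linear combination of correctly ordered quadratic products (the $[e,e]$ relation produces $e*f$ terms, the mixed relations produce $f*f$ terms, and $f*f$ commute), so iterating gives the spanning set of standard monomials directly.

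The one genuine gap is that you assert, rather than prove, that the map $\widetilde{A}\to A$ is surjective, i.e.\ that $A$ is generated in degree $1$. This is exactly where the remaining geometric input enters and it cannot be skipped: since $R^{\Theta-\mathrm{st}}_{(n,n)}(K_2)=\emptyset$ for $n\geq 2$, the identification of $A_n/\sum_{k+l=n}A_k*A_l$ with $A^*_{G_{(n,n)}}(R^{\Theta-\mathrm{st}}_{(n,n)})_{\mathbb{Q}}$ forces $A_n=\sum_{k+l=n}A_k*A_l$ for all $n\geq 2$, and induction then gives generation in degree $1$. Without this step your stage three only bounds $\dim\widetilde{A}_n$ from above by $\dim\mathrm{Sym}^n(A_1)$ and concludes nothing about $A$ itself; with it, the upper bound on $\widetilde{A}$ meets the lower bound coming from surjectivity and the known Poincar\'e series, and the argument closes exactly as in the paper.
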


From these defining relation we see that $A$ is a filtered algebra, with the filtration induced by the degree function $\deg(e_i)=1$ and $\deg(f_i)=0$ for all $i$, such that the associated graded is the symmetric algebra of $A_1$.

\section{Proof of Theorem \ref{t1}}

\subsection{Idea of the proof}

Since $R_{(n,n)}^{\Theta-\mathrm{st}}(K_2)=\emptyset$ for $n\geq 2$, the tautological presentation of Section \ref{ssp} yields that
$$A_n=\sum_{k+l=n}A_k* A_l$$
for $n\geq 2$. By induction over $n$, we find that $A_n$ is generated in degree $1$. We will prove the following property in the next section:

There exists an ordered basis $(b_i)_{i\in I}$ for $A_1$ such that
$$b_j*b_i=\sum_{k\leq l}c^{i,j}_{k,l}\cdot b_k* b_k\mbox{ for all }i<j.$$

This implies that every $A_n$ is linearly generated by the monomials $b_{i_1}\cdot\ldots\cdot b_{i_n}$ for $i_1\leq\ldots\leq i_n$ in $I$. By the above result, this implies that these monomials already form a linear basis of $A_n$. This in turn implies that $A$ is generated by the $b_i$ subject to the relations

$$b_j* b_i-\sum_{k\leq l}c^{i,j}_{k,l}\cdot b_k*b_l\mbox{ for all }i<j.$$

\subsection{Computations}

To compute relations in $A$, we first describe some of the multiplication maps in $A(K_2)$ more explicitly using the above algebraic description as a modified shuffle product:

As above, we identify $$H(K_2)_{(d,e)}\cong\mathbb{Q}[x_1,\ldots,x_d]^{S_d}\otimes\mathbb{Q}[y_1,\ldots,y_e]^{S_e}.$$

\begin{lem} We have:
\begin{itemize}
\item The multiplication map $H(K_2)_{(1,0)}\otimes H(K_2)_{(0,1)}\rightarrow H(K_2)_{(1,1)}$ is given by
$$(f*g)(x,y)=f(x)g(y)(y-x)^2.$$
\item The multiplication map $H(K_2)_{(1,0)}\otimes H(K_2)_{(1,2)}\rightarrow H(K_2)_{(2,2)}$ is given by
\begin{align*}
(f*g)(x_1,x_2,y_1,y_2)=\frac{1}{x_2-x_1}\cdot\left(\right.& f(x_1)g(x_2,y_1,y_2)(y_1-x_1)^2(y_2-x_1)^2-\\
&\left. -f(x_2)g(x_1,y_1,y_2)(y_1-x_2)^2(y_2-x_2)^2\right).
\end{align*}
\item The multiplication map $H(K_2)_{(1,1)}\otimes H(K_2)_{(1,1)}\rightarrow H(K_2)_{(2,2)}$ is given by
\begin{align*}
(f*g)(x_1,x_2,y_1,y_2)=&\frac{1}{(x_2-x_1)(y_2-y_1)}\cdot\\
&\cdot\left(f(x_1,y_1)g(x_2,y_2)(y_2-x_1)^2-f(x_1,y_2)g(x_2,y_1)(y_1-x_1)^2\right.\\
&\ \ \left. -f(x_2,y_1)g(x_1,y_2)(y_2-x_2)^2+f(x_2,y_2)g(x_1,y_1)(y_1-x_2)^2\right).
\end{align*}
\end{itemize}
\end{lem}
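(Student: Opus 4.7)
Each of the three formulas is a direct unpacking of the shuffle product with kernel recalled in the introduction, applied to the Kronecker quiver $K_2$. The plan is first to record the relevant values of the Euler form, then, for each of the three bidegree pairs, to enumerate the set of shuffle permutations and read off the kernel from the universal formula; the identities then follow by collecting terms.

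For $K_2$, writing $a,b\in\{i,j\}$, one has $\langle\mathbf{e}_i,\mathbf{e}_i\rangle=\langle\mathbf{e}_j,\mathbf{e}_j\rangle=1$, $\langle\mathbf{e}_i,\mathbf{e}_j\rangle=-2$ (two arrows from $i$ to $j$), and $\langle\mathbf{e}_j,\mathbf{e}_i\rangle=0$. Hence in the kernel the exponent $-\langle\mathbf{e}_a,\mathbf{e}_b\rangle$ equals $-1$ for $a=b$, equals $+2$ for $(a,b)=(i,j)$, and vanishes for $(a,b)=(j,i)$. Only the last three kinds of factors ever contribute.

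I would then treat each multiplication in turn. For $H(K_2)_{(1,0)}\otimes H(K_2)_{(0,1)}$ the only nontrivial shuffle indices are $(d'_i,d''_i)=(1,0)$ and $(d'_j,d''_j)=(0,1)$, so there is a unique shuffle and only the $(a,b)=(i,j)$ block of the kernel survives, contributing the single factor $(y-x)^2$; this gives the first formula. For $H(K_2)_{(1,0)}\otimes H(K_2)_{(1,2)}$ one has $(d'_i,d''_i)=(1,1)$ and $(d'_j,d''_j)=(0,2)$, so $\sigma_j$ is forced to be trivial while $\sigma_i$ ranges over the two $(1,1)$-shuffles; the $(i,i)$ block produces the inverse difference $(x_{\sigma_i(2)}-x_{\sigma_i(1)})^{-1}$ and the $(i,j)$ block produces $(y_1-x_{\sigma_i(1)})^2(y_2-x_{\sigma_i(1)})^2$, which combine to the stated formula after noting that $x_1-x_2=-(x_2-x_1)$. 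For the case $H(K_2)_{(1,1)}\otimes H(K_2)_{(1,1)}$ both $\sigma_i$ and $\sigma_j$ independently range over the two $(1,1)$-shuffles, giving four summands; the $(i,i)$ and $(j,j)$ blocks produce the denominators $(x_{\sigma_i(2)}-x_{\sigma_i(1)})(y_{\sigma_j(2)}-y_{\sigma_j(1)})$ while the $(i,j)$ block contributes $(y_{\sigma_j(2)}-x_{\sigma_i(1)})^2$; pulling out the common denominator $(x_2-x_1)(y_2-y_1)$ and tracking the sign from each transposition recovers the four-term alternating sum in the statement.

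The entire argument is a bookkeeping exercise, and there is no real obstacle: the only place where one must be careful is the sign tracking in the third formula, where one has to remember that swapping $\sigma_i$ or $\sigma_j$ flips the sign of the corresponding denominator factor, so that the four summands come with the alternating signs $+,-,-,+$ exactly as written.
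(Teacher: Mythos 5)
Your proposal is correct and follows exactly the intended route: the paper states this lemma without proof precisely because it is the routine specialization of the general shuffle-product formula from Section 1.1 to the Kronecker quiver, which is what you carry out. Your Euler form values, the enumeration of shuffles in each bidegree, and the sign bookkeeping in the third formula all check out.
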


We thus find $A_1\cong\mathbb{Q}[x,y]/(y-x)^2$, for which we choose the basis $e_n=x^n$ for $n\geq 0$ and $f_n=x^{n-1}(y-x)$ for $n\geq 1$.
We define generating series
\begin{align*}
E(X)&=\sum_{n\geq 0}e_nX^n=\frac{1}{1-xX}\in A_1[X],& F(X)&=\sum_{n\geq 0}f_{n+1}X^n=\frac{y-x}{1-xX}\in A_1[X].
\end{align*}

\begin{lem} The following relations hold in $A_2[X,Y]$ (where $X$ and $Y$ are viewed as variables commuting with each other and with $A_1$ and $A_2$):
\begin{align*}
[E(X),E(Y)]_*&=2\frac{(YE(Y)-XE(X))*(YF(Y)-XF(X))}{Y-X},\\
[E(X),F(Y)]_*&=\frac{(YF(Y)-XF(X))^{*2}}{Y-X},\\
[F(X),F(Y)]_*&=0.\\
\end{align*}
\end{lem}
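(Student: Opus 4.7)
The plan is to verify all three identities by direct computation using the third formula of the preceding lemma for the product $H(K_2)_{(1,1)}\otimes H(K_2)_{(1,1)}\to H(K_2)_{(2,2)}$, and then passing to the quotient $A_2[X,Y]$ when necessary.

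The key algebraic preparation is the identity
\begin{equation*}
YE(Y)-XE(X) \;=\; \frac{Y(1-xX)-X(1-xY)}{(1-xX)(1-xY)} \;=\; (Y-X)\,E(X)\,E(Y)
\end{equation*}
in $A_1[X,Y]$, where the right-hand side uses the ordinary commutative product in $\mathbb{Q}[x,y]/(y-x)^2$. Combined with $F=(y-x)E$, it yields $YF(Y)-XF(X)=(y-x)(Y-X)E(X)E(Y)$, so the divisions by $Y-X$ on the right-hand sides become transparent. Substituting, the first two right-hand sides reduce to $2(Y-X)\cdot[E(X)E(Y)]*[(y-x)E(X)E(Y)]$ and $(Y-X)\cdot[(y-x)E(X)E(Y)]^{*2}$, each a single shuffle product of elements with very simple $y$-dependence.

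For the first identity, plugging $f=E(X),g=E(Y)$ into the shuffle formula and using $(y_2-x_i)^2-(y_1-x_i)^2=(y_2-y_1)(y_1+y_2-2x_i)$ to cancel the $(y_2-y_1)$ denominator, one obtains, after antisymmetrization in $X,Y$,
\begin{equation*}
[E(X),E(Y)]_*=\frac{2(Y-X)(y_1+y_2-x_1-x_2)}{(1-x_1X)(1-x_2X)(1-x_1Y)(1-x_2Y)}.
\end{equation*}
Applying the same formula to $[E(X)E(Y)]*[(y-x)E(X)E(Y)]$ produces a four-term bracketed numerator that factors as $(x_2-x_1)(y_2-y_1)(y_1+y_2-x_1-x_2)$, reproducing the left-hand side exactly after the prefactor $2(Y-X)$. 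Thus the first identity already holds in $H(K_2)_{(2,2)}[X,Y]$, and the second is proved by an entirely analogous calculation.

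The third identity is more delicate: a parallel computation gives $[F(X),F(Y)]_*$ as a nonzero element of $H(K_2)_{(2,2)}[X,Y]$ whose numerator factor is essentially $(y_1+y_2-x_1-x_2)\bigl[(y_2-x_1)(y_1-x_1)+(y_1-x_2)(y_2-x_2)\bigr]$. The main obstacle of the proof is then to recognize this expression as an element of the subspace $\sum_{\mu(\mathbf{d'})>1/2}H(K_2)_{\mathbf{d'}}*H(K_2)_{\mathbf{d''}}$ so that it vanishes in the quotient $A_2[X,Y]$. A direct approach is to compute the three lower-slope convolutions (from the decompositions $(2,2)=(2,0)+(0,2)$, $(2,1)+(0,1)$, and $(1,0)+(1,2)$) via the general shuffle formula and, coefficient by coefficient in $X^pY^q$, match $[f_{p+1},f_{q+1}]_*$ with an explicit sum of such products.
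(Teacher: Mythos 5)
For the first two relations your argument coincides with the paper's: the identity $\frac{YE(Y)-XE(X)}{Y-X}=\frac{1}{(1-xX)(1-xY)}$ (and its analogue for $F$) is exactly the paper's starting observation, and the shuffle computation of $[E(X),E(Y)]_*$ via the $(1,1)\otimes(1,1)\to(2,2)$ formula, yielding $\frac{2(Y-X)(y_1+y_2-x_1-x_2)}{(1-x_1X)(1-x_2X)(1-x_1Y)(1-x_2Y)}$, matches the paper's. These parts are correct.

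The gap is in the third relation, which is the only genuinely delicate point of the lemma. You correctly observe that $[F(X),F(Y)]_*$ is nonzero in $H(K_2)_{(2,2)}[X,Y]$ and must be shown to lie in $\sum_{\mu(\mathbf{d'})>1/2}H(K_2)_{\mathbf{d'}}*H(K_2)_{\mathbf{d''}}$, and you list the three relevant decompositions correctly. But you then only \emph{propose} a computation (``match $[f_{p+1},f_{q+1}]_*$ coefficient by coefficient with an explicit sum of such products'') without exhibiting any such sum or verifying that one exists; membership in this ideal is precisely the claim to be proved, so asserting that a matching can be found is not a proof. The paper closes this step with a single closed-form witness: using only the decomposition $(2,2)=(1,0)+(1,2)$ and the corresponding formula from the preceding lemma, it computes
$$\frac{1}{(1-xX)(1-xY)}*\frac{1}{(1-xX)(1-xY)}=\frac{(y_1+y_2-x_1-x_2)\,(x_1^2+x_2^2+2y_1y_2-(x_1+x_2)(y_1+y_2))}{(1-x_1X)(1-x_1Y)(1-x_2X)(1-x_2Y)},$$
which is, up to the factor $-(Y-X)$, exactly the expression you obtained for $[F(X),F(Y)]_*$. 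So no coefficient-by-coefficient matching over three decompositions is needed; to complete your proof you should produce this (or an equivalent) explicit preimage in $H(K_2)_{(1,0)}[X,Y]\otimes H(K_2)_{(1,2)}[X,Y]$.
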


\begin{proof} We first note that
$$\frac{YE(Y)-XE(X)}{Y-X}=\frac{1}{(1-xX)(1-xY)},$$
and similarly for the series $F$.

We have
$$\frac{1}{1-xX}*\frac{1}{1-xY}=$$
$$=\frac{1}{(x_2-x_1)(y_2-y_1)}\cdot\left( \frac{(y_2-x_1)^2}{(1-x_1X)(1-x_2Y)}-\frac{(y_1-x_1)^2}{(1-x_1X)(1-x_2Y)}\right.$$
$$\left. -\frac{(y_2-x_2)^2}{(1-x_2X)(1-x_1Y)}+\frac{(y_1-x_2)^2}{(1-x_2X)(1-x_1Y)}\right)=$$
$$=\frac{1}{x_2-x_1}\cdot\left( \frac{y_1+y_2-2x_1}{(1-x_1X)(1-x_2Y)}-\frac{y_1+y_2-2x_2}{(1-x_2X)(1-x_1Y)}\right).$$
Exchanging the variables $X$ and $Y$, we find
$$\frac{1}{1-xY}*\frac{1}{1-xX}=\frac{1}{x_2-x_1}\cdot\left( \frac{y_1+y_2-2x_1}{(1-x_2X)(1-x_1Y)}-\frac{y_1+y_2-2x_2}{(1-x_1X)(1-x_2Y)}\right).$$
Thus
\begin{align*}
\left[\frac{1}{1-xX},\frac{1}{1-xY}\right]_*&=\frac{2(y_1+y_2-x_1-x_2)}{x_2-x_1}\cdot\left(\frac{1}{1-x_1X)(1-x_2Y)}-\frac{1}{(1-x_2X)(1-x_1Y)}\right)\\
&=\frac{2(y_1+y_2-x_1-x_2)(Y-X)}{(1-x_1X)(1-x_2X)(1-x_1Y)(1-x_2Y)}.
\end{align*}

On the other hand, we have
\begin{align*}
&\frac{YE(Y)-XE(X)}{Y-X}*\frac{YF(Y)-XF(X)}{Y-X}\\
=&\frac{1}{(1-xX)(1-xY)}*\frac{y-x}{(1-xX)(1-xY)}\\
=&\frac{(y_2-x_2)(y_2-x_1)^2-(y_1-x_2)(y_1-x_1)^2-(y_2-x_1)(y_2-x_2)^2+(y_1-x_1)(y_1-x_2)^2}{(x_2-x_1)(y_2-y_1)(1-x_1X)(1-x_1Y)(1-x_2X)(1-x_2Y)}\\
=&\frac{y_1+y_2-x_1-x_2}{(1-x_1X)(1-x_1Y)(1-x_2X)(1-x_2Y)},
\end{align*}
from which the first claimed relation follows.

The second relation is proved similarly, so we omit some intermediate steps. On the one hand, we have
$$[E(X),F(Y)]_*=\left[\frac{1}{1-xX},\frac{y-x}{1-xY}\right]_*=\frac{(y_1+y_2-x_1-x_2)^2(Y-X)}{(1-x_1X)(1-x_2X)(1-x_1Y)(1-x_2Y)},$$
and on the other hand
$$(\frac{YF(Y)-XF(X)}{Y-X})^{*2}=(\frac{y-x}{(1-xX)(1-yY)})^{*2}=\frac{(y_1+y_2-x_1-x_2)^2}{(1-x_1X)(1-x_2X)(1-x_1Y)(1-x_2Y)},$$
and the claimed relation follows.

To prove the third relation, we first proceed similarly to the previous computations to arrive at
$$[F(X),F(Y)]_*=\frac{(y_1+y_2-x_1-x_2)((x_1+x_2)(y_1+x_2)-x_1^2-x_2^2-2y_1y_2)(Y-X)}{(1-x_1X)(1-x_2X)(1-x_1Y)(1-x_2Y)}.$$
This element is non-zero in $A_2[X,Y]=H_{(2,2)}(K_2)[X,Y]$, but we claim that it belongs to the image of the multiplication map
$$H(K_2)_{(1,0)}[X,Y]\otimes H(K_2)_{(1,2)}[X,Y]\rightarrow H(K_2)_{(2,2)}[X,Y].$$
Namely, by the above description of this map, we find
$$\frac{1}{(1-xX)(1-xY)}*\frac{1}{(1-xX)(1-xY)}=\frac{(y_1+y_2-x_1-x_2)(x_1^2+x_2^2+2y_1y_2-(x_1+x_2)(y_1+y_2))}{(1-x_1X)(1-x_1Y)(1-x_2X)(1-x_2Y)},$$
as claimed. The lemma is proved.
\end{proof}

We would now like to understand which relations between the individual basis elements $e_n$, $f_n$ result from the above identities of generating series. We have
$$[E(X),E(Y)]_*=\sum_{p,q\geq 0}[e_p,e_q]_*X^pY^q$$
and
$$\frac{YE(Y)-XE(X)}{Y-X}=\sum_{p,q\geq 0}e_{p+q}X^pY^q.$$
Thus
$$\frac{YE(Y)-XE(X)}{Y-X}*(YF(Y)-XF(X))=$$
$$=\sum_{i,j\geq 0}e_{i+j}X^iY^j*(\sum_{k\geq 0}f_{k+1}Y^{k+1}-\sum_{k\geq 0}f_{k+1}X^{k+1})=$$
$$=\sum_{p,q\geq 0}(\sum_{j+k=q-1}e_{p+j}f_{k+1}-\sum_{i+k=p-1}e_{i+q}f_{k+1})X^pY^q.$$
Thus for $p<q$, we find the relations
$$[e_p,e_q]_*=2(e_p*f_q+e_{p+1}*f_{q-1}+\ldots+e_{q-1}*f_{p+1}).$$
The same calculation for the second type of relation leads to
$$[e_p,f_{q+1}]_*=f_{p+1}*f_q+\ldots+f_q*f_{p+1}\mbox{ for }p<q,$$
$$[e_p,f_{q+1}]_*=-f_{q+1}*f_p-\ldots- f_p*f_{q+1}\mbox{ for }p>q.$$
Finally, we have
$$[f_{p+1},f_{q+1}]_*=0$$
for all $p<q$.
We can now finish the proof of the theorem: ordering the basis elements as
$$e_0,e_1,e_2,\ldots, f_1,f_2,\ldots,$$
we see that the above relations allow us to reorder any given product of the generators into a monomial in standard ordering.

\section{Further descriptions}

In this section, we indicate three alternative descriptions of the algebra $A$ in order to better understand the nature of its defining relations.

\subsection{Twisted symmetric algebra}

For a vector space $V$ and an operator $c\in{\rm End}(V\otimes V)$ such that $c^2={\rm id}$, we can define the $c$-twisted symmetric algebra
$${\rm Sym}^c(V)=T(V)/({\rm Ker}(c-{\rm id})).$$
In the case of the flip operator $c(v\otimes w)=w\otimes v$, we get the usual symmetric algebra. If $c$ moreover satisfies the quantum Yang-Baxter equation
$$(c\otimes {\rm id})({\rm id}\otimes c)(c\otimes{\rm id})=({\rm id}\otimes c)(c\otimes{\rm id})({\rm id}\otimes c),$$
the algebra ${\rm Sym}^c(V)$ is called a braided symmetric algebra.

\begin{thm}\label{symc} The algebra $A$ is isomorphic to ${\rm Sym}^c(A_1)$ for the operator $c$ defined on the formal series $E(X)$ and $F(X)$ by
$$c(E(X)\otimes E(Y))=E(Y)\otimes E(X)+(Y-X)(\frac{YE(Y)-XE(X)}{Y-X}\otimes\frac{YF(Y)-XF(X)}{Y-X}+$$
$$+\frac{YF(Y)-XF(X)}{Y-X}\otimes\frac{YE(Y)-XE(X)}{Y-X}),$$
$$c(E(X)\otimes F(Y))=F(Y)\otimes E(X)+(Y-X)\frac{YF(Y)-XF(X)}{Y-X}\otimes\frac{YF(Y)-XF(X)}{Y-X},$$
$$c(F(X)\otimes E(Y))=E(Y)\otimes F(X)+(Y-X)\frac{YF(Y)-XF(X)}{Y-X}\otimes\frac{YF(Y)-XF(X)}{Y-X},$$
$$c(F(X)\otimes F(Y))=F(Y)\otimes F(X).$$
\end{thm}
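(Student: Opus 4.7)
The plan is to exhibit $A$ and $\mathrm{Sym}^c(A_1)$ as quotients of the tensor algebra $T(A_1)$ by the same quadratic ideal. By the proof of Theorem~\ref{t1}, $A$ is generated in degree one and presented by the four families of quadratic relations displayed there, with no further relations needed thanks to the PBW-type ordering argument given in that proof. Hence it suffices to match, one by one, the defining relations $\xi \equiv c(\xi)$ of $\mathrm{Sym}^c(A_1)$ with the Kronecker relations of Theorem~\ref{t1}.

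First I would verify that the four formulas defining $c$ on $E(X) \otimes E(Y)$, $E(X) \otimes F(Y)$, $F(X) \otimes E(Y)$, and $F(X) \otimes F(Y)$ uniquely determine a linear endomorphism of $A_1 \otimes A_1$: extracting the coefficient of $X^p Y^q$ returns $c$ on each spanning pair of basis elements. A short check, resting on the swap-symmetry of
\[
  A := \frac{YE(Y) - XE(X)}{Y-X}, \qquad B := \frac{YF(Y) - XF(X)}{Y-X}
\]
in $X, Y$, together with the identity $c(B \otimes B) = B \otimes B$, then verifies $c^2 = \mathrm{id}$.

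The core step is to translate each relation $\xi \equiv c(\xi)$ into an identity in $A$ and compare it with the corresponding Kronecker relation. For $\xi = E(X) \otimes E(Y)$ the definition of $c$ yields
\[
  E(X) * E(Y) - E(Y) * E(X) = (Y-X)\bigl(A * B + B * A\bigr)
\]
in $A$, and matching this with the first Kronecker relation of Theorem~\ref{t1} requires $A * B = B * A$ in $A_2$. This is the crux of the argument: the identity is not formal (since individual $e_i$ and $f_j$ do not commute), and must be verified by unfolding the third shuffle formula of the preceding lemma, after which both $A * B$ and $B * A$ collapse to the same symmetric expression
\[
  \frac{y_1 + y_2 - x_1 - x_2}{\prod_{i=1,2}(1-x_i X)(1-x_i Y)}.
\]
The remaining three relation types are handled in parallel: the $F \otimes F$ case is immediate from $[F(X),F(Y)]_* = 0$, the $E \otimes F$ case reproduces the second Kronecker relation directly, and the $F \otimes E$ case follows from the $E \otimes F$ case by swapping $X \leftrightarrow Y$ and exploiting $B(X,Y) = B(Y,X)$.

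The main obstacle is therefore the coincidence $A * B = B * A$ in $A_2$. Once it is secured, both algebras are quotients of $T(A_1)$ by the same two-sided ideal in degree two, and the PBW-type presentation of $A$ from Theorem~\ref{t1} shows that this ideal is already complete on each side: no higher relations survive, so the algebras are isomorphic.
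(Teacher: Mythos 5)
Your proposal is correct and follows the same global strategy as the paper: check $c^2=\mathrm{id}$, rewrite the relations $\xi\equiv c(\xi)$ of $\mathrm{Sym}^c(A_1)$ in terms of $E(X,Y)=YE(Y)-XE(X)$ and $F(X,Y)=YF(Y)-XF(X)$, and observe that the only mismatch with the presentation of Theorem \ref{t1} is that $\mathrm{Sym}^c$ produces the symmetrized product $E(X,Y)\cdot F(X,Y)+F(X,Y)\cdot E(X,Y)$ where $A$ has $2\,E(X,Y)*F(X,Y)$, so everything reduces to the commutativity of $E(X,Y)$ and $F(X,Y)$. The one place you diverge is in how that commutativity is established. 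The paper derives it formally from the second relation: since $\frac{1}{Y-X}F(X,Y)^{*2}$ is antisymmetric under $X\leftrightarrow Y$, one gets $[E(Y),F(X)]_*=-[E(X),F(Y)]_*$, and expanding $[E(X,Y),F(X,Y)]_*$ into such commutators (plus the diagonal terms $[E(X),F(X)]_*$, which vanish by specializing $Y=X$) kills it. You instead verify $A*B=B*A$ by unfolding the shuffle formula in $H(K_2)_{(2,2)}$; this computation is correct --- both orders of the product collapse to $(y_1+y_2-x_1-x_2)\prod_{i=1,2}(1-x_iX)^{-1}(1-x_iY)^{-1}$. Be aware of one small logical point: to conclude that $T(A_1)/(R_A)$ and $T(A_1)/(R_c)$ coincide, the discrepancy $\delta=\frac{1}{Y-X}\bigl(E(X,Y)\otimes F(X,Y)-F(X,Y)\otimes E(X,Y)\bigr)$ between the two quadratic relation spaces must lie in \emph{both} ideals, and your shuffle computation only directly places it in the ideal presenting $A$. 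The paper's formal manipulation exhibits $\delta$ as a combination of the $[E,F]$-relations, which are common to the two presentations, and hence gives both inclusions at once; you can close this in your write-up either by quoting that expansion or by a dimension count against the PBW basis, so the argument goes through.
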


\proof We abbreviate $E(X,Y)=YE(Y)-XE(X)$, and similarly $F(X,Y)$. Then
$$c^2(E(X)\otimes F(Y))=c(F(Y)\otimes E(X))+\frac{1}{(Y-X)}c(F(X,Y)\otimes F(X,Y))=$$
$$=E(X)\otimes F(Y)+\frac{1}{X-Y}F(X,Y)\otimes F(X,Y)+\frac{1}{Y-X}F(X,Y)\otimes F(X,Y)=E(X)\otimes F(Y).$$
With a similar computation we find $c(E(X,Y)\otimes F(X,Y))=F(X,Y)\otimes E(X,Y)$, which yields
$$c^2(E(X)\otimes E(Y))=c(E(Y)\otimes E(X))+\frac{1}{Y-X}c(E(X,Y)\otimes F(X,Y)+F(X,Y)\otimes E(X,Y))=$$
$$=E(X)\otimes E(Y)+\frac{1}{X-Y}(E(X,Y)\otimes F(X,Y)+F(X,Y)\otimes E(X,Y))+$$
$$+\frac{1}{Y-X}(E(X,Y)\otimes F(X,Y)+F(X,Y)\otimes E(X,Y))=E(X)\otimes F(Y),$$
proving $c^2={\rm id}$. The defining relations of ${\rm Sym}^c(V)$ are then given by
$$[E(X),E(Y)]=\frac{1}{Y-X}(E(X,Y)\cdot F(X,Y)+F(X,Y)\cdot E(X,Y)),$$
$$[E(X),F(Y)]=\frac{1}{Y-X}F(X,Y)^2,\; [F(X),F(Y)]=0.$$
Comparing this with the defining relations of $A$, it suffices to prove that $E(X,Y)$ and $F(X,Y)$ commute in $A$. But this follows from the identity
$$[E(Y),F(X)]=(X-Y)F(X,Y)^2=-(Y-X)F(X,Y)^2=-[E(X),F(Y)]$$
in $A$, finishing the proof.

\begin{conj} The operator $c$ given above satisfies the Quantum Yang-Baxter equation. \end{conj}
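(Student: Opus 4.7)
The plan is to verify the braid identity on $A_1^{\otimes 3}$ by evaluating both sides on the eight triples of generating series $\epsilon_1(X)\otimes\epsilon_2(Y)\otimes\epsilon_3(Z)$ with $\epsilon_i\in\{E,F\}$. Writing $A_1=V_e\oplus V_f$ for the decomposition induced by the chosen basis (spanned by the $e_n$ and the $f_n$ respectively) and $c=\tau+N$ with $\tau$ the flip, the four defining formulas reveal that $N$ \emph{strictly} increases the $V_f$-degree (the number of tensor factors sitting in $V_f$); in particular $N(F\otimes F)=0$. Expanding both sides of the QYBE in terms of $\tau$ and $N$, the purely $\tau$-term cancels by the ordinary braid relation for the symmetric group, leaving fourteen mixed terms to pair off. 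Since the $V_f$-degree of any tensor in $A_1^{\otimes 3}$ is at most three, only finitely many $N$-products actually contribute on a given input, which gives a clean induction.

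I would then proceed by induction on the initial $V_f$-degree. The case $F(X)\otimes F(Y)\otimes F(Z)$ (degree three) is trivial because $N$ annihilates every $F\otimes F$ slot. The three single-$E$ cases (degree two) admit only a single application of $N$ before the result lies in $V_f^{\otimes 3}$, where $c$ reduces to $\tau$; the matching of the two correction terms then follows from a short computation exploiting the $\tau$-symmetries of $F(X,Y)^{\otimes 2}$ and of $E(X,Y)\otimes F(X,Y)+F(X,Y)\otimes E(X,Y)$ already used in the proof of Theorem~\ref{symc}. The two-$E$ cases are analogous but allow up to two successive $N$'s, producing one more layer of corrections still confined by the $V_f$-filtration.

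The main obstacle will be the all-$E$ case $E(X)\otimes E(Y)\otimes E(Z)$, where up to three $N$'s can act in succession. Each $N$ applied to an $E\otimes E$ slot produces $(Y-X)^{-1}\bigl(E(X,Y)\otimes F(X,Y)+F(X,Y)\otimes E(X,Y)\bigr)$, and the next $c$ must re-expand $E(X,Y)=YE(Y)-XE(X)$ into its single-variable pieces before reapplying the braiding, producing a cascade of rational expressions in $X,Y,Z$ with denominators $(Y-X),(Z-Y),(Z-X)$. After clearing denominators the statement becomes a finite polynomial identity in $A_1^{\otimes 3}\otimes\mathbb{Q}[X,Y,Z]$, which in principle is checkable by collecting coefficients or specialising to generic numerical values. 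A more satisfying structural route---presumably the "right" one---would be to realise $A$ (or rather $H(K_2)$) as a shuffle or Nichols-type algebra and to recover $c$ as the associated universal $R$-matrix, for which the QYBE holds a priori; this, however, would require additional Hopf-algebraic input beyond what is developed in the paper, and so I expect the direct verification sketched above to be the most accessible path to a first proof.
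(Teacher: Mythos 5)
The first thing to note is that the paper does not prove this statement: it is stated, and deliberately left, as a \emph{conjecture}, so there is no proof of record to compare yours against. Your submission must therefore stand entirely on its own, and as written it does not: it is a plan for a verification rather than the verification itself. Your structural observations are sound. Writing $c=\tau+N$, the four defining formulas do show that $N$ strictly raises the number of tensor slots lying in the span of the $f_n$, so the expansion of $c_{12}c_{23}c_{12}-c_{23}c_{12}c_{23}$ terminates on any input; the pure-$\tau$ terms cancel by the braid relation for the flip; and the cases with at least one $F$-factor are short, using the symmetries of $F(X,Y)\otimes F(X,Y)$ and of $E(X,Y)\otimes F(X,Y)+F(X,Y)\otimes E(X,Y)$ already exploited in the proof of Theorem \ref{symc}. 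But you explicitly defer the decisive case $E(X)\otimes E(Y)\otimes E(Z)$ -- where the cascade of re-expansions of $E(X,Y)=YE(Y)-XE(X)$ produces the genuinely nontrivial rational identity in $X,Y,Z$ -- to a computation you describe only as ``in principle checkable.'' Until that identity is actually established, the conjecture remains open; and ``specialising to generic numerical values'' does not prove an identity in $A_1^{\otimes 3}\otimes\mathbb{Q}[X,Y,Z]$ unless you first bound the degrees involved and verify sufficiently many specialisations, which you do not do. The Nichols/shuffle-algebra route you mention is likewise only named, not carried out.

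If you want to push the direct approach through, a more economical organisation than tracking the cascade in the generating series is to use the splitting $A_1\cong W\oplus W$ with $W=\mathbb{Q}[t]$ via $e_i\mapsto(t^i,0)$ and $f_i\mapsto(0,t^{i-1})$. In these coordinates every correction block of $c$ is built from $\tau$ and a \emph{single} auxiliary operator on $W\otimes W$ (essentially $\frac{1\otimes t}{1\otimes t-t\otimes 1}(\mathrm{id}-\tau)$, which encodes the divided difference $\frac{YF(Y)-XF(X)}{Y-X}$), and one checks that this operator satisfies a short list of quadratic relations against $\tau$. The QYBE then reduces to a finite, if tedious, identity of $8\times 8$ block matrices rather than an open-ended induction on the $V_f$-degree. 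That is the shape a complete argument would need to take; your proposal stops well short of it.
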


\subsection{Differential operators}

We consider the algebra $B=\mathbb{Q}[w_1,w_2,\ldots]$ and define operators $e_i$ and $f_i$ on $B$ as follows: the operator $f_i$ acts by multiplication with $w_i$, and the operator $e_i$ is a derivation defined on generators of $B$ by
$$e_iw_{j+1}=w_{i+1}w_k+\ldots-w_{i+j}w_1-(w_{j+1}w_i+\ldots+w_{i+j}w_1).$$
In more compact form, define
$$E(X)=\sum_{n\geq 0}e_nX^n,\;\;\; F(X)=\sum_{n\geq 0}f_{n+1}X^n,\;\;\; W(X)=\sum_{n\geq 0}w_{n+1}X^n.$$
Then $F(X)$ acts on $B[X]$ via multiplication by $W(X)$, and $E(X)$ acts via derivations such that
$$E(X)W(Y)=(Y-X)(\frac{(YW(Y)-XW(X))}{Y-X})^2.$$

\begin{thm} The algebra $A$ acts on $B$ via the operators defined above.
\end{thm}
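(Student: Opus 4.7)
The strategy is to verify that the operators $e_i, f_i \in \mathrm{End}(B)$ satisfy the defining relations of $A$ given in Theorem \ref{t1}; the presentation of $A$ by generators and relations will then yield a well-defined algebra homomorphism $A \to \mathrm{End}(B)$.

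The relation $[F(X), F(Y)]_* = 0$ is immediate, since every $f_i$ acts by multiplication by the variable $w_i \in B$ and the $w_i$ commute in $B$.

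For $[E(X), F(Y)]_*$, I would use that $E(X)$ is a derivation and $F(Y)$ acts as multiplication by $W(Y)$. For any $b \in B$,
\[
[E(X), F(Y)](b) = E(X)\bigl(W(Y)\, b\bigr) - W(Y)\, E(X)(b) = \bigl(E(X) W(Y)\bigr)\cdot b,
\]
so the commutator equals the operator of multiplication by $E(X) W(Y)$. By the defining formula this is multiplication by $(Y-X) \bigl( \tfrac{Y W(Y) - X W(X)}{Y-X} \bigr)^{2}$. Since $F$ acts by multiplication by $W$ and $A$-products go to composition of operators, this is precisely the action of $(Y-X) \bigl( \tfrac{Y F(Y) - X F(X)}{Y-X} \bigr)^{*2}$ demanded by the second defining relation of $A$.

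The delicate case is the relation $[E(X), E(Y)]_*$. Since $E(X)$ and $E(Y)$ are both derivations on $B$, so is their commutator, and it therefore suffices to verify the identity on generators, equivalently on the formal series $W(Z)$. My plan is to compute $E(X) E(Y) W(Z)$ by first substituting $E(Y) W(Z) = (Z-Y)\bigl(\tfrac{Z W(Z) - Y W(Y)}{Z-Y}\bigr)^{2}$ and then applying $E(X)$ via the Leibniz rule, replacing each occurring $E(X) W(U)$ (for $U = Y, Z$) by its value given by the defining formula. Antisymmetrizing in $X \leftrightarrow Y$ should leave precisely the action on $W(Z)$ of the right-hand side $2(Y-X)\tfrac{YE(Y)-XE(X)}{Y-X}*\tfrac{YF(Y)-XF(X)}{Y-X}$, which as an operator is $2(Y-X)$ times the composition of the derivation $\tfrac{YE(Y)-XE(X)}{Y-X}$ with multiplication by $\tfrac{YW(Y)-XW(X)}{Y-X}$ (and therefore, on $W(Z)$, splits via the Leibniz rule into one term where the derivation hits the multiplicator and one where it hits $W(Z)$). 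The main obstacle is the book-keeping of the resulting rational-function identity in $X, Y, Z$ and the $W$-values; this is the representation-theoretic analogue of the computations carried out in the proof of the preceding lemma, and I expect it to follow from a careful but routine manipulation in the appropriate localization.
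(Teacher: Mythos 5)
Your overall strategy---verify the three relations of Theorem \ref{t1} for the operators $e_i,f_i$ and invoke the presentation of $A$ to obtain a homomorphism $A\to\mathrm{End}(B)$---is exactly the paper's, and your treatment of $[F(X),F(Y)]_*=0$ and of $[E(X),F(Y)]_*$ is complete and correct (for the latter you even verify the operator identity on an arbitrary $b\in B$, which is slightly cleaner than checking it on $W(Z)$). The problems are concentrated in the first relation, which is where essentially all of the content of the theorem lies.

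First, your reduction to generators is not justified as stated. The commutator $[E(X),E(Y)]$ of two derivations is indeed a derivation, but the right-hand side, read as an operator in the order in which the relation of Theorem \ref{t1} is written, is $\tfrac{2}{Y-X}$ times (the derivation $E(X,Y)=YE(Y)-XE(X)$) composed with (multiplication by $W(X,Y)=YW(Y)-XW(X)$), and a composite ``derivation after multiplication'' is \emph{not} a derivation in general: it equals ``multiplication after derivation'' plus the multiplication operator by $E(X,Y)\bigl(W(X,Y)\bigr)$. So agreement of the two sides on the generators $w_i$ does not yet imply agreement on all of $B$. To close this you must either check separately that $E(X,Y)\bigl(W(X,Y)\bigr)=0$ --- which does hold, since the defining formula gives $E(Y)(W(X))=-E(X)(W(Y))$ and $E(X)(W(X))=0$ --- so that the two factors commute and the right-hand side is a genuine derivation; or do what the paper does, namely prove the reordered identity $[E(X),E(Y)]=\tfrac{2}{Y-X}F(X,Y)E(X,Y)$ (multiplication on the outside, which \emph{is} a derivation) and then appeal to the equivalence of this with the stated relation, established in the proof of Theorem \ref{symc}. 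Second, the decisive step --- that after substituting $E(Y)(W(Z))=\tfrac{1}{Z-Y}W(Y,Z)^2$, applying the Leibniz rule, and antisymmetrizing in $X\leftrightarrow Y$ one lands exactly on $\tfrac{2}{Y-X}W(X,Y)\cdot\bigl(YE(Y)-XE(X)\bigr)(W(Z))$ --- is asserted rather than performed. The identity is true, but it is not a formality: it requires repeated use of $W(X,Z)-W(Y,Z)=W(X,Y)$ and occupies most of the paper's proof. As written, your argument establishes the easy two thirds of the theorem and defers the hard third to ``a careful but routine manipulation''.
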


\proof We verify that the above operators on $B$ fulfill the defining relations of $A$. Since $F(X)$ acts via multiplication on the commutative algebra $B$, the relation $[F(X),F(Y)]=0$ is obvious.

As in the previous proof, we will abbreviate $W(X,Y)=YW(Y)-XW(X)$, and similarly for $E(X,Y)$, $F(X,Y)$. Since $E(X)$ acts by derivations, we have
$$E(X)(F(Y)(W(Z)))=E(X)(W(Y)W(Z))=E(X)(W(Y))W(Z)+W(Y)E(X)(W(Z))=$$
$$=E(X)(W(Y))W(Z)+F(Y)(E(X)(W(Z))),$$
and thus
$$[E(X),F(Y)](W(Z))=E(X)(W(Y))W(Z)=\frac{1}{Y-X}W(X,Y)^2W(Z)=({(Y-X)}F(X,Y)^2)(W(Z)),$$
verifying the second defining relation. 

The verification of the first defining relation is more involved. We have
$$E(X)(E(Y)(W(Z)))=\frac{1}{Z-Y}E(X)(W(Y,Z)^2)=\frac{2}{Z-Y}W(Y,Z)E(X)(W(Y,Z)),$$
and similarly
$$E(Y)(E(X)(W(Z)))=\frac{2}{Z-X}W(X,Z)E(Y)(W(X,Y)),$$
thus $[E(X),E(Y)](W(Z))=$
$$=\frac{2}{(Z-Y)(Z-X)}((Z-X)W(Y,Z)E(X)(W(Y,Z))-(Z-Y)W(X,Z)E(Y)(W(X,Z)))=$$
$$=\frac{2}{(Z-Y)(Z-X)}(Z(Z-X)W(Y,Z)E(X)(W(Z))-Y(Z-X)W(Y,Z)E(X)(W(Y))-$$
$$-Z(Z-Y)W(X,Z)E(Y)(W(Z))+X(Z-Y)W(X,Z)E(Y)(W(X)))=$$
$$=\frac{2}{(Z-Y)(Z-X)}(ZW(Y,Z)W(X,Z)^2-Y(Z-X)W(Y,Z)E(X)(W(Y))-$$
$$-ZW(X,Z)W(Y,Z)^2+X(Z-Y)W(X,Z)E(Y)(W(X))).$$
Using $W(X,Z)-W(Y,Z)=W(X,Y)$ and $W(Y,X)=-W(X,Y)$, this can be rewritten as
$$\frac{2}{(Z-Y)(Z-X)(Y-X)}(Z(Y-X)W(X,Z)W(Y,Z)W(X,Y)-$$
$$-W(X,Y)^2(Y(Z-X)W(Y,Z)+X(Z-Y)W(X,Z)))=$$
$$=\frac{2}{Y-X}W(X,Y)\frac{1}{(Z-Y)(Z-X)}(Z(Y-X)W(X,Z)W(Y,Z)-$$
$$-W(X,Y)(Y(Z-X)W(Y,Z)+X(Z-Y)W(X,Z))).$$
Again using $W(X,Z)-W(Y,Z)=W(X,Y)$, we find
$$Z(Y-X)W(X,Z)W(Y,Z)-W(X,Y)(Y(Z-X)W(Y,Z)+X(Z-Y)W(X,Z))=$$
$$=Y(Z-X)W(Y,Z)^2-X(Z-Y)W(X,Z)^2,$$
and thus
$$E(X)(E(Y)(W(Z)))=\frac{2}{Y-X}W(X,Y)\frac{Y(Z-X)W(Y,Z)^2-X(Z-Y)W(X,Z)^2}{(Z-Y)(Z-X)}=$$
$$=\frac{2}{Y-X}W(X,Y)(YE(Y)(W(Z))-XE(X)(W(Z)))=\frac{2}{Y-X}F(X,Y)(E(X,Y)(W(Z))),$$
proving the identity $[E(X),E(Y)]=2\frac{1}{Y-X}F(X,Y)E(X,Y)$, which by the proof of Theorem \ref{symc} is equivalent to the first defining relation of $A$. This finishes the proof.

\begin{conj} The subalgebra generated by the operators $e_i$ and $f_i$ inside the ring of differential operators on $B$ is isomorphic to $A$.
\end{conj}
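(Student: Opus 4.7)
The plan is to upgrade the already-established surjection $\pi: A \twoheadrightarrow D$, where $D$ denotes the subalgebra of the ring of differential operators on $B$ generated by the operators $e_i$ and $f_i$, to an isomorphism. The existence of $\pi$ is exactly the content of the preceding theorem; only injectivity remains.

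First I would equip the ring of differential operators on $B$ with the usual order filtration. Under this filtration $f_i$ is of order $0$ (multiplication by $w_i$) and $e_i$ is of order $1$ (a derivation), so the induced filtration on $D$ matches, on generators, the filtration on $A$ introduced after Theorem~\ref{t1} (with $\deg e_i = 1$ and $\deg f_i = 0$), whose associated graded was identified there with the symmetric algebra on $A_1$. Consequently $\pi$ is a filtered map, and it suffices to show that the induced surjection of associated graded algebras
\[ \mathrm{gr}(\pi): \mathrm{Sym}^*(A_1) \twoheadrightarrow \mathrm{gr}(D) \subset \mathbb{Q}[w_1,w_2,\ldots,\xi_1,\xi_2,\ldots] \]
is injective, where $\xi_j$ denotes the principal symbol of $\partial/\partial w_j$. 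Explicitly, $\mathrm{gr}(\pi)$ sends $f_i \mapsto w_i$ and $e_i \mapsto \sigma(e_i) := \sum_{j \geq 1} (e_i \cdot w_j)\, \xi_j$.

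The problem thus reduces to proving algebraic independence of the countable family $\{w_i\}_{i \geq 1} \cup \{\sigma(e_i)\}_{i \geq 0}$ in the commutative polynomial ring $\mathbb{Q}[w_\bullet,\xi_\bullet]$. The generating-series identity
\[ E(X)\cdot W(Y) = (Y-X)\Bigl(\tfrac{YW(Y)-XW(X)}{Y-X}\Bigr)^{2} \]
from the previous subsection provides explicit formulas for each $e_i \cdot w_j$, and hence for every symbol $\sigma(e_i)$, as a specific quadratic in the $w_k$ paired linearly with the $\xi_\ell$. I would then attempt a leading-term argument: choose a monomial order on $\mathbb{Q}[w_\bullet,\xi_\bullet]$ (for instance by total $\xi$-degree first, then lexicographically in $(w,\xi)$) under which the leading monomials of distinct products of generators $\sigma(e_i)$ and $w_j$ are visibly different, thereby yielding algebraic independence and hence injectivity of $\mathrm{gr}(\pi)$.

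The main obstacle is this final combinatorial step. Because each $\sigma(e_i)$ is a linear combination of infinitely many terms with coefficients coming from the expansion of the generating-series identity above, ruling out subtle cross-cancellations in hypothetical polynomial relations requires careful bookkeeping and may resist a naive leading-term attack. Should a direct approach fail, a natural conceptual alternative is to identify $B$ with the equivariant cohomology (or Chow) ring of some canonical moduli space attached to the Kronecker quiver on which $A$ already acts geometrically in the sense of a cohomological Hall algebra, so that faithfulness becomes a statement about the non-triviality of an explicit correspondence action rather than about symbolic independence.
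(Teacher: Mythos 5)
The statement you are addressing is stated in the paper as a \emph{conjecture}: the authors prove only that the operators $e_i$, $f_i$ satisfy the defining relations of $A$ (so that $A$ acts on $B$, equivalently there is a surjection $\pi\colon A\twoheadrightarrow D$ onto the subalgebra $D$ they generate), and they explicitly leave injectivity open. So there is no proof in the paper to compare yours against, and you should not present a strategy for an open problem as a proof.

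That said, your reduction is sound as far as it goes: $\pi$ is filtered for the order filtration on differential operators versus the filtration $\deg e_i=1$, $\deg f_i=0$ on $A$, the associated graded of $A$ is $\mathrm{Sym}^*(A_1)$ by the PBW-type basis from Theorem \ref{t1}, and injectivity of $\mathrm{gr}(\pi)$ would indeed force injectivity of $\pi$ since the filtration is exhaustive and bounded below. The genuine gap is the step you yourself flag: the algebraic independence of $\{w_i\}\cup\{\sigma(e_i)\}$ in $\mathbb{Q}[w_\bullet,\xi_\bullet]$ is asserted to be approachable by a leading-term argument but is never carried out, and this is precisely where the difficulty of the conjecture lives. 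Note that your problem can be reduced further: since each $\sigma(e_i)=\sum_j (e_i\cdot w_{j})\,\xi_{j}$ is $\xi$-linear and the $w_i$ are already independent, grading by $\xi$-degree reduces everything to showing that the linear forms $\sigma(e_0),\sigma(e_1),\ldots$ are linearly independent over the field $\mathbb{Q}(w_\bullet)$, i.e.\ that the matrix $\bigl(e_i\cdot w_{j}\bigr)_{i,j}$ (whose entries are read off from $E(X)W(Y)=(Y-X)\bigl(\tfrac{YW(Y)-XW(X)}{Y-X}\bigr)^2$) has full row rank; no relation $\sum_i\lambda_i(w)\,e_i\cdot w_j=0$ for all $j$ may hold with rational-function coefficients. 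Until that rank statement (or an equivalent) is actually verified, the argument establishes nothing beyond the surjection already contained in the paper's theorem; the suggested geometric fallback (realizing $B$ as cohomology of a moduli space with a faithful CoHA action) is likewise only a proposal, not an argument.
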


\subsection{Yangian} The category of regular representations of the Kronecker quiver and the category of torsion sheaves on the projective line being equivalent, one can expect an isomorphism of the associated cohomological Hall algebras. In the latter case, this is a special case of the class of algebras which will be described in \cite{SV:19}. The defining relations given there show that $A$ can be viewed as a kind of Yangian algebra.

\subsection*{Acknowledgments} The authors would like to thank Ben Davison, Evgeny Feigin, Catharina Stroppel and Oksana Yakimova for helpful discussions on the results of this work, and Olivier Schiffmann for pointing out \cite{SV:19} and explaining the results obtained there.

\bibliographystyle{abbrv}
\bibliography{Literature}

\end{document}